\theoremstyle{plain}
\newtheorem{theorem}{Theorem}[section]
\newtheorem{lemma}[theorem]{Lemma}
\theoremstyle{definition}
\theoremstyle{remark}
 \numberwithin{equation}{section} 
\begin{document}
\title{The Blow-up  Rate Estimates for  a System of Heat Equations  with Nonlinear Boundary Conditions}
\author{Maan A. Rasheed and Miroslav Chlebik}

\maketitle

\abstract 

This paper deals with  the  blow-up properties of positive solutions to a system of two heat equations  $u_t= \Delta u,~v_t=\Delta v$ in $B_R  \times (0,T)$ with Neumann boundary conditions 
$\frac{\partial u}{\partial \eta} =e^{v^p},~ \frac{\partial v}{\partial \eta}=e^{u^q}$ on $\partial B_R  \times (0,T),$ where $p,q>1,$ $B_R$ is a ball in $R^n,$ $\eta$ is the outward normal. The upper  bounds of blow-up rate estimates were obtained. It is also proved that the blow-up occurs only on the boundary.

\section{\bf Introduction}

In this paper, we consider the system of  two heat equations with coupled nonlinear Neumann boundary conditions, namely
\begin{equation}\label{c1}\left. \begin{array}{lll}
u_t= \Delta u,& v_t=\Delta v,& (x,t)\in B_R  \times (0,T),\\
\frac{\partial u}{\partial \eta} =e^{v^p}, & \frac{\partial v}{\partial \eta}=e^{u^q},& (x,t)\in \partial B_R  \times (0,T),\\
u(x,0)=u_0(x),&  v(x,0)=v_0(x),&  x \in {B}_R,
\end{array} \right\} \end{equation} 
   where  $p,q>1,$ $B_R$ is a ball in $R^n,$ $\eta$ is the outward normal, $u_0,v_0$ are smooth, radially symmetric, nonzero, nonnegative functions satisfy the condition
   \begin{equation}\label{Mo}\Delta u_0,\Delta u_0\ge 0,\quad u_{0r}(|x|),v_{0r}(|x|)\ge 0,\quad  x  \in \overline{B}_R. \end{equation}

The problem of system of two heat equations with nonlinear Neumann boundary conditions defined in a ball, 

\begin{equation}\label{3}\left. \begin{array}{lll}
u_t= \Delta u,& v_t=\Delta v,& (x,t)\in B_R  \times (0,T),\\
\frac{\partial u}{\partial \eta} =f(v), & \frac{\partial v}{\partial \eta}=g(u),& (x,t)\in \partial B_R  \times (0,T),\\
u(x,0)=u_0(x),&  v(x,0)=v_0(x),&  x \in {B}_R,
\end{array} \right\} \end{equation} 

was introduced in \cite{19,7,20,18}, for instance, in \cite{19} it was studied the blow-up solutions to the system (\ref{3}), where
\begin{equation}\label{c11}
 f(v) =v^p, \quad g(u) =u^q, \quad p,q>1.
  \end{equation}
It  was proved that for any nonzero, nonnegative initial data $(u_0,v_0),$ the finite time blow-up can only occur on the boundary, moreover, it was shown in \cite{20} that, the blow-up rate estimates take the following form
$$c\le \max_{x\in\overline{\Omega}}u(x,t)(T-t)^{\frac{p+1}{2(pq-1)} }\le C,\quad t\in(0,T),$$
$$c\le\max_{x\in\overline{\Omega}}v(x,t)(T-t)^{\frac{q+1}{2(pq-1)} }\le C,\quad t\in(0,T).$$

In \cite{7,18}, it was considered the solutions of the system (\ref{3}) with exponential Neumann boundary conditions model, namely
\begin{equation}\label{c12}
f(v) =e^{pv}, \quad  g(u) =e^{qu},\quad p,q>0. \end{equation}
It was proved that for any nonzero, nonnegative initial data, $(u_0,v_0),$ the solution blows up in finite time and the blow-up occurs only on the boundary, moreover, the blow-up rate estimates take the following forms
\begin{equation*}
C_1\le e^{qu(R,t)}(T-t)^{1/2} \le C_2, \quad C_3\le e^{pv(R,t)}(T-t)^{1/2} \le C_4.
\end{equation*}

In this paper, we prove that the upper blow-up rate estimates for problem (\ref{c1}) take the following form
\begin{align*}
\max_{ \overline{B}_R}u(x,t)\le \log C_1-\frac{\alpha}{2} \log(T-t),\quad 0<t<T, \\ \max_{ \overline{B}_R}v(x,t)\le \log C_2-\frac{\beta}{2} \log (T-t),\quad 0<t<T,  \end{align*}  where 
$\alpha=\frac{p+1}{pq-1},\beta=\frac{q+1}{pq-1}.$
 Moreover, the blow-up occurs only on the boundary.

\section{Preliminaries}
The local existence and uniqueness of classical solutions to problem (\ref{c1}) is well known by  \cite{47}. On the other hand, every nontrivial solution blows up simultaneously in finite time, and that due to the known blow-up results of problem (\ref{3}) with (\ref{c11}) and the comparison principle \cite{47}.  
  
 In the following lemma we study some properties of the classical solutions of problem (\ref{c1}). We denote for simplicity $u(r,t)=u(x,t).$
\begin{lemma}\label{c}
    Let $(u,v)$ be a classical unique solution of (\ref{c1}). Then
\begin{enumerate}[\rm(i)]
\item $u, v$ are  positive, radial. Moreover, $u_r ,v_r \ge 0$ in $[0,R]\times (0,T).$
 \item   
$u_t,v_t>0$ in $\overline B_R\times (0,T).$ 
\end{enumerate}
\end{lemma}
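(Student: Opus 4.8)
The plan is to prove both assertions by standard parabolic arguments: part (i) by the strong maximum principle together with Hopf's lemma applied to the equations satisfied by $u_r$ and $v_r$, and part (ii) by deriving a parabolic problem for $u_t$ and $v_t$ and again invoking the maximum principle, using the sign conditions (\ref{Mo}) on the initial data. First I would note that radial symmetry of $u$ and $v$ follows from the radial symmetry of the domain $B_R$, of the initial data $u_0,v_0$, and of the boundary data (which depend only on $u,v$), by uniqueness of the classical solution: if $\mathcal{R}$ is any rotation, then $(u\circ\mathcal{R},v\circ\mathcal{R})$ solves the same problem, hence coincides with $(u,v)$. In radial coordinates the system becomes $u_t=u_{rr}+\frac{n-1}{r}u_r$, $v_t=v_{rr}+\frac{n-1}{r}v_r$ on $(0,R)\times(0,T)$, with $u_r(0,t)=v_r(0,t)=0$, $u_r(R,t)=e^{v(R,t)^p}>0$, $v_r(R,t)=e^{u(R,t)^q}>0$.

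Positivity of $u,v$ follows from the strong maximum principle: $u_0,v_0$ are nonnegative and nonzero, the boundary flux is nonnegative (indeed positive), so $u,v>0$ in $\overline{B}_R\times(0,T)$. For the monotonicity in $r$, set $w=u_r$. Differentiating the radial equation, $w$ satisfies $w_t=w_{rr}+\frac{n-1}{r}w_r-\frac{n-1}{r^2}w$ on $(0,R)\times(0,T)$, with $w(0,t)=0$, $w(R,t)=e^{v(R,t)^p}>0$, and $w(r,0)=u_{0r}(r)\ge 0$ by (\ref{Mo}). Since the zeroth-order coefficient $-\frac{n-1}{r^2}$ is nonpositive, the maximum principle (applied to $-w$) gives $w\ge 0$, i.e. $u_r\ge 0$ on $[0,R]\times(0,T)$; the same argument with $v$ in place of $u$ gives $v_r\ge 0$. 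A minor technical point is the singularity of the coefficients at $r=0$, which is handled either by working on $[\varepsilon,R]$ and letting $\varepsilon\to 0$, or by noting that $w/r$ extends smoothly and arguing with that quantity; I would mention this but not belabor it.

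For part (ii), let $z=u_t$ and $\tilde z=v_t$. Differentiating (\ref{c1}) in $t$, the pair $(z,\tilde z)$ solves $z_t=\Delta z$, $\tilde z_t=\Delta \tilde z$ in $B_R\times(0,T)$, with the coupled Neumann conditions obtained by differentiating the boundary equalities: $\frac{\partial z}{\partial\eta}=p\,v^{p-1}e^{v^p}\,\tilde z$ and $\frac{\partial \tilde z}{\partial\eta}=q\,u^{q-1}e^{u^q}\,z$ on $\partial B_R\times(0,T)$, with coefficients that are smooth, bounded on compact time subintervals, and \emph{nonnegative} (here one uses $u,v\ge 0$ from part (i)). At $t=0$ we have $z(x,0)=\Delta u_0(x)\ge 0$ and $\tilde z(x,0)=\Delta v_0(x)\ge 0$ by (\ref{Mo}). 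Since the boundary coupling is cooperative (nonnegative off-diagonal coefficients), the maximum principle for this weakly coupled parabolic system yields $z,\tilde z\ge 0$, and then the strong maximum principle, together with the fact that $z,\tilde z$ are not identically zero (the solution is genuinely evolving toward blow-up), upgrades this to $u_t,v_t>0$ in $\overline{B}_R\times(0,T)$. The main obstacle is the justification of the maximum principle for the coupled system with Neumann-type boundary conditions whose coefficients depend on $t$ and blow up as $t\to T$: this is resolved by restricting to $\overline{B}_R\times[0,T-\delta]$ for arbitrary $\delta>0$, on which all coefficients are bounded, and then letting $\delta\to 0$.
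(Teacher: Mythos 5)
The paper states Lemma \ref{c} without proof (it is treated as a standard fact, of the kind proved in the cited references for analogous systems), so there is no in-paper argument to compare against. Judged on its own merits, your proposal is correct and follows the standard route: radial symmetry by uniqueness under rotations, positivity and $u_r,v_r\ge 0$ by the maximum principle applied to the radial derivative (and the singular coefficients at $r=0$ are indeed harmless, since $u_r(0,t)=0$ puts $r=0$ on the parabolic boundary, so a negative interior minimum of $w=u_r$ already contradicts the equation because the zeroth-order term $-\tfrac{n-1}{r^2}w$ has the right sign), and time-monotonicity by differentiating the system in $t$ and exploiting the cooperative boundary coupling.

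Two steps deserve more care than you give them. First, writing $u_t(x,0)=\Delta u_0(x)\ge 0$ presupposes that $u_t$ extends continuously to $t=0$, which needs the usual compatibility of the data at the corner $\partial B_R\times\{0\}$ (implicitly assumed for classical solutions; alternatively, one can avoid it by comparing $(u(\cdot,\cdot+h),v(\cdot,\cdot+h))$ with $(u,v)$ through the comparison principle for this cooperative system, using that $(u_0,v_0)$ is a time-independent subsolution). Second, the strictness $u_t,v_t>0$ should not be justified by an appeal to the solution "evolving toward blow-up", which the lemma does not assume; argue instead that if $u_t\equiv 0$ on a time strip then $u$ is harmonic there and the divergence theorem gives $0=\int_{S_R}\frac{\partial u}{\partial \eta}\,ds=\int_{S_R}e^{v^p}\,ds>0$, a contradiction, and then the strong maximum principle in the interior together with Hopf's lemma at boundary points (where a zero minimum would force $\frac{\partial u_t}{\partial\eta}<0$, contradicting $\frac{\partial u_t}{\partial\eta}=p\,v^{p-1}e^{v^p}v_t\ge 0$) gives positivity on all of $\overline{B}_R\times(0,T)$.
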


\section{Rate Estimates}
In order to study the upper blow-up rate estimates for problem (\ref{c1}), we need to recall some results from \cite{23,20}.
\begin{lemma}{\bf \cite{20}}\label{d}
Let  $A(t)$ and $B(t)$ be positive $C^1$ functions in $[0,T)$ and satisfy
$$A^{'} (t)\ge c \frac{B^p(t)}{\sqrt{T-t}},\quad B^{'} (t)\ge c \frac{A^q(t)}{\sqrt{T-t}} \quad \mbox{for} \quad t \in [0,T),$$
$$A(t) \longrightarrow + \infty \quad \mbox{or} \quad B(t) \longrightarrow + \infty \quad \mbox{as} \quad t  \longrightarrow T^{-},$$
where $p,q>0,c>0 $ and $pq>1.$ Then there exists $C>0$ such that
$$A(t) \le C(T-t)^{-\alpha /2}, \quad B(t) \le C(T-t)^{-\beta /2}, \quad t \in[0,T),$$
where $\alpha=\frac{p+1}{pq-1},\beta=\frac{q+1}{pq-1}.$
\end{lemma}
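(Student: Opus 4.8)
The plan is to reduce the system of differential inequalities to a single ODE inequality by forming an appropriate product. First I would introduce the weighted quantities and exploit the coupling: from $A'(t)\ge c\,B^p(t)/\sqrt{T-t}$ and $B'(t)\ge c\,A^q(t)/\sqrt{T-t}$ one expects $A$ to grow like $B^p$ and $B$ like $A^q$, which is consistent with a self-similar ansatz $A\sim (T-t)^{-\alpha/2}$, $B\sim (T-t)^{-\beta/2}$ precisely when $\alpha q+1 = \beta(\ldots)$, i.e. when $\alpha=\frac{p+1}{pq-1}$ and $\beta=\frac{q+1}{pq-1}$ (one checks $\alpha = p\beta/(\cdot)$ etc.). The natural move is therefore to consider $\Phi(t) := A(t)^{a} B(t)^{b}$ for exponents $a,b>0$ chosen so that $\Phi$ satisfies a closed inequality. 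Differentiating, $\Phi' = \Phi\big(a A'/A + b B'/B\big) \ge \Phi\big(ac B^p/(A\sqrt{T-t}) + bc A^q/(B\sqrt{T-t})\big)$, and by the weighted AM–GM inequality the bracket is bounded below by a constant multiple of $A^{(q-1)/?}B^{(p-1)/?}\cdot(\ldots)/\sqrt{T-t}$; the exponents $a,b$ should be picked so that this lower bound is itself a power of $\Phi$.

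Concretely, I would look for $a,b$ with $p b - a$ and $q a - b$ in the right ratio so that $B^{pb-a}A^{qa-b}$ is proportional to a power of $\Phi=A^aB^b$; the clean choice making the two terms scale identically is $a = q+1$, $b = p+1$ (up to normalization), since then one wants to show $A^{q+1}B^{p+1}$ controls things. With such a choice, AM–GM gives $\Phi'(t) \ge \dfrac{c'\,\Phi(t)^{\kappa}}{\sqrt{T-t}}$ for some $\kappa>1$ and $c'>0$ depending only on $p,q,c$. Since $A$ or $B\to\infty$ as $t\to T^-$, and the inequalities force both to blow up (if $A\to\infty$ then $B'$ is eventually large, so $B\to\infty$, and vice versa), we have $\Phi(t)\to\infty$. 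Integrating the scalar inequality $\Phi^{-\kappa}\Phi' \ge c'(T-t)^{-1/2}$ from $t$ to a point near $T$: the left side integrates to $\frac{1}{\kappa-1}\big(\Phi(t)^{1-\kappa} - \Phi(s)^{1-\kappa}\big)\to \frac{1}{\kappa-1}\Phi(t)^{1-\kappa}$ as $s\to T^-$, while the right side integrates to $2c'\big(\sqrt{T-t}-\sqrt{T-s}\big)\to 2c'\sqrt{T-t}$. This yields $\Phi(t)^{1-\kappa}\ge c''\sqrt{T-t}$, i.e. $\Phi(t)\le C(T-t)^{-1/(2(\kappa-1))}$, which is exactly a bound of the form $A(t)^{q+1}B(t)^{p+1}\le C(T-t)^{-(pq-1)/2}$ after computing $\kappa$.

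The final step is to decouple: from $A^{q+1}B^{p+1}\le C(T-t)^{-(pq-1)/2}$ alone one does not immediately get the individual estimates, so I would feed this back into the original inequalities. For instance $B'(t)\ge cA^q/\sqrt{T-t}$ together with a matching lower-type control, or more simply: rewrite the product bound and use one of the two differential inequalities again to separate the variables — e.g. from $A' \ge cB^p/\sqrt{T-t}$ and the product bound express $B^p$ in terms of $A$ and $(T-t)$, getting a single inequality for $A$ of the form $A'\ge c\,A^{-(p/(p+1))(q+1)}(T-t)^{-\text{something}}$, then integrate. The main obstacle I anticipate is exactly this decoupling bookkeeping: choosing the exponents $a,b$ and verifying that the arithmetic of the powers of $(T-t)$ works out to reproduce $\alpha=\frac{p+1}{pq-1}$ and $\beta=\frac{q+1}{pq-1}$ rather than something off by the wrong factor. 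The inequality directions (all $\ge$, with blow-up) are favorable, so no sign subtleties arise; the work is in the algebra of exponents and in justifying that both $A$ and $B$ blow up so that the boundary terms at $s\to T^-$ vanish.
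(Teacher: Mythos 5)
The paper never proves this lemma---it is quoted from \cite{20}---so your attempt can only be judged on its own terms. The first half of your plan does work: taking $\Phi=A^{q+1}B^{p+1}$, weighted AM--GM applied to $a A'/A+bB'/B$ gives $\Phi'\ge c'\Phi^{\kappa}/\sqrt{T-t}$ with $\kappa-1=\frac{pq-1}{2(p+1)(q+1)}$, and integrating from $t$ to $s\to T^-$ (the discarded term $\Phi(s)^{1-\kappa}$ is positive, so you do not even need to argue that $\Phi\to\infty$) yields $A^{q+1}(t)B^{p+1}(t)\le C(T-t)^{-(p+1)(q+1)/(pq-1)}$. Note that this, and not the $(T-t)^{-(pq-1)/2}$ you wrote, is the exponent that comes out; it is precisely the product of the two target bounds, so the $\Phi$-bound is sharp but carries no more information than "the product is controlled."

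The genuine gap is the decoupling step, and as sketched it fails. The product bound gives an \emph{upper} bound on $B^p$ in terms of $A$ and $T-t$, while $A'\ge cB^p/\sqrt{T-t}$ is a \emph{lower} bound on $A'$ in terms of $B^p$; substituting an upper bound for $B^p$ there goes in the wrong direction and produces no inequality for $A$ at all. The only substitution the inequality directions permit is the trivial one $B\ge B(0)>0$, and that yields $A\le C(T-t)^{-\alpha}$, i.e.\ the square of the claimed rate $(T-t)^{-\alpha/2}$. To get the stated rate from the product bound you would need a matching lower bound $B(t)\ge c(T-t)^{-\beta/2}$, which the one-sided hypotheses do not provide, so the product bound cannot be decoupled by algebra alone. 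A correct way to finish (and the standard proof of such lemmas) bypasses $\Phi$ entirely: if $A(t_0)=Y$, monotonicity and two successive integrations of the system (first $B(t)\ge c\,Y^q\sqrt{T-t_0}$ on a definite fraction of $[t_0,T)$, then $A'\ge cB^p/\sqrt{T-t}$ again, iterated over doubling values of $A$ with geometrically summable waiting times, using $pq>1$) show that $A$ must become infinite within time $C\,Y^{-2(pq-1)/(p+1)}$ after $t_0$; since $A$ is finite and $C^1$ on $[0,T)$ this forces $Y\le C(T-t_0)^{-\alpha/2}$, and symmetrically for $B$. Finally, your side claim that $A\to\infty$ forces $B\to\infty$ because "$B'$ is eventually large" is not immediate---$\int_t^T(T-\tau)^{-1/2}\,d\tau$ converges, so a large integrand need not make $B$ diverge---but, as noted above, that claim is not needed anyway.
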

\begin{lemma}\label{pk}{\bf \cite{23}}
Let $x \in \overline{B}_R.$ If $0\le a <n-1.$ Then there exist $C>0$ such that $$\int_{S_R} \frac{ds_y}{|x-y|^a }\le C.$$
\end{lemma}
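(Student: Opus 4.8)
\emph{Proof idea.} Assume $n\ge 2$ (for $n=1$ the hypothesis $0\le a<n-1$ is vacuous). The plan is to reduce the required estimate — which must be uniform over all $x\in\overline{B}_R$ — to the single hardest case $x\in S_R$, where the singularity of $|x-y|^{-a}$ actually lies on the surface of integration, and then to dispatch that case by an explicit passage to spherical coordinates.

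For the reduction, given $x\in\overline{B}_R$ let $x^\ast\in S_R$ be its radial projection ($x^\ast=Rx/|x|$ if $x\neq0$, and $x^\ast$ any point of $S_R$ if $x=0$). I would first establish the elementary comparison
\[
|x-y|\ \ge\ \tfrac12\,|x^\ast-y|\qquad\text{for all }y\in S_R .
\]
Writing $|x|=sR$ with $s\in[0,1]$ and letting $\theta$ denote the angle between $x$ (equivalently $x^\ast$) and $y$, the law of cosines gives $|x-y|^2=R^2\bigl(s^2-2s\cos\theta+1\bigr)$ and $|x^\ast-y|^2=2R^2(1-\cos\theta)$, so the claim reduces to $s^2-2s\cos\theta+1\ge\tfrac12(1-\cos\theta)$; the left-hand minus right-hand side is affine in $\cos\theta\in[-1,1]$ and equals $(s-1)^2\ge0$ at $\cos\theta=1$ and $s^2+2s\ge0$ at $\cos\theta=-1$, hence is nonnegative on all of $[-1,1]$. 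Raising to the power $-a$ and integrating yields
\[
\int_{S_R}\frac{ds_y}{|x-y|^a}\ \le\ 2^a\int_{S_R}\frac{ds_y}{|x^\ast-y|^a},
\]
so it suffices to bound the right-hand side uniformly for $x^\ast\in S_R$.

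By rotational invariance of $S_R$ and its surface measure, $\int_{S_R}|z-y|^{-a}\,ds_y$ does not depend on which $z\in S_R$ is chosen; take $z=Re_n$, parametrize $y=R\omega$ with $\omega\in S^{n-1}$, and use the polar angle $\phi\in[0,\pi]$ measured from $e_n$, so that $|z-y|=2R\sin(\phi/2)$ and $ds_y=R^{n-1}\sin^{n-2}\phi\,d\phi\,d\sigma_{n-2}$, where $\sigma_{n-2}$ is the surface measure of $S^{n-2}$ with total mass $\omega_{n-2}$. Then
\[
\int_{S_R}\frac{ds_y}{|z-y|^a}\ =\ \frac{\omega_{n-2}\,R^{\,n-1-a}}{2^a}\int_0^{\pi}\frac{\sin^{n-2}\phi}{\sin^{a}(\phi/2)}\,d\phi .
\]
Since $\sin^{n-2}\phi=2^{\,n-2}\sin^{n-2}(\phi/2)\cos^{n-2}(\phi/2)$, the integrand equals $2^{\,n-2}\cos^{n-2}(\phi/2)\,\sin^{\,n-2-a}(\phi/2)$, which is comparable to $\phi^{\,n-2-a}$ near $\phi=0$ and bounded near $\phi=\pi$; the one-dimensional integral hence converges precisely when $n-2-a>-1$, i.e. $a<n-1$, and we obtain the asserted constant $C=C(n,a,R)$.

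I do not expect a serious obstacle here: the computation is routine. The only delicate point is that the bound be \emph{uniform} in $x$ up to and including $S_R$ — a crude interior estimate such as $|x-y|\ge R-|x|$ blows up as $x\to S_R$ — and this is exactly what the comparison $|x-y|\ge\tfrac12|x^\ast-y|$ secures. One could instead observe that $x\mapsto\int_{S_R}|x-y|^{-a}\,ds_y$ is continuous on the compact set $\overline{B}_R$, hence bounded; but justifying continuity at the boundary needs essentially the same estimate, so the direct reduction above is the cleaner route.
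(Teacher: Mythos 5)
Your proposal is correct. Note that the paper offers no proof of this lemma at all: it is quoted verbatim from Friedman's book \cite{23}, where the standard argument bounds the surface integral by flattening the boundary locally (projecting a neighbourhood of the singular point onto the tangent hyperplane), a method that works for any smooth bounded domain. Your argument instead exploits the special geometry of the ball: the comparison $|x-y|\ge\tfrac12|x^\ast-y|$ (whose verification via the affine-in-$\cos\theta$ argument is sound, with equality checked at both endpoints) reduces the uniform bound over $\overline{B}_R$ to the worst case $x\in S_R$, and the explicit spherical-coordinate computation with $|z-y|=2R\sin(\phi/2)$ correctly identifies $a<n-1$ as exactly the integrability threshold, since the integrand behaves like $\phi^{\,n-2-a}$ at the singularity. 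This buys a short, completely self-contained and even sharp proof for the ball, at the cost of generality (it does not transfer to non-spherical boundaries, for which the cited flattening argument is needed). Two cosmetic remarks: for $n=2$ your factor $\omega_{n-2}$ should be read as the counting measure of $S^0$ (total mass $2$), which your formula does accommodate since the circle integral over $[0,2\pi)$ equals twice that over $[0,\pi]$; and you rightly flag that a crude interior bound such as $|x-y|\ge R-|x|$ would not give the uniformity the paper actually uses, since the lemma is applied with $x,y\in S_R$.
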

\begin{theorem}\label{Jump}{\bf (Jump relation, \cite{23})}
Let $\Gamma(x,t)$ be the fundamental solution of heat equation, namely
\begin{equation}\label{op}\Gamma(x,t)=\frac{1}{(4\pi t)^{(n/2)}}\exp[-\frac {|x|^2}{4t}] .\end{equation} Let $\varphi$ be a continuous function on $S_R \times[0,T].$ Then for any $x \in {B}_R,x^0 \in S_R,0<t_1<t_2\le T,$ for some $T>0,$ the function
$$U(x,t)=\int_{t_1}^{t_2} \int_{S_R} \Gamma (x-y,t-z)\varphi(y,z)ds_yd\tau  $$ satisfies the jump realtion
$$\frac{\partial}{\partial \eta}U(x,t) \rightarrow -\frac{1}{2}\varphi(x^0,t)+\frac{\partial}{\partial \eta}U(x^0,t), \quad as\quad x \rightarrow x^0.$$
\end{theorem}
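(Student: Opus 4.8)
The plan is to differentiate $U$ under the integral sign, localize the resulting singular integral near the target point $x^0$, and extract the jump from an explicit Gaussian computation, treating the density $\varphi$ by a freezing argument that confines the whole discontinuity to the constant-density part. First I would record the kernel. Since $\nabla_x\Gamma(x-y,t-\tau)=-\frac{x-y}{2(t-\tau)}\Gamma(x-y,t-\tau)$, writing $\eta=\eta(x^0)$ for the fixed outward unit normal at $x^0$, we have
$$\frac{\partial}{\partial\eta}\Gamma(x-y,t-\tau)=-\frac{\eta\cdot(x-y)}{2(t-\tau)}\,\Gamma(x-y,t-\tau),$$
so that $\frac{\partial U}{\partial\eta}(x,t)=-\int_{t_1}^{t}\int_{S_R}\frac{\eta\cdot(x-y)}{2(t-\tau)}\Gamma(x-y,t-\tau)\varphi(y,\tau)\,ds_y\,d\tau$, the $\tau$-range being $[t_1,t]$ by causality of $\Gamma$. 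The only obstruction to continuity as $x\to x^0$ comes from the corner of the integration region where simultaneously $y\to x^0$ and $\tau\to t$.

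Second, I would freeze the density. Writing $\varphi(y,\tau)=[\varphi(y,\tau)-\varphi(x^0,t)]+\varphi(x^0,t)$ splits $\partial_\eta U$ into a remainder carrying the factor $\varphi(y,\tau)-\varphi(x^0,t)$ and a term $\varphi(x^0,t)\,\partial_\eta W$, where $W$ denotes the potential with unit density. On $S_R$ one has $\eta\cdot(x^0-y)=O(|x^0-y|^2)$ by the curvature of the sphere; combining this with the elementary bound $r^2e^{-r^2/4s}\le C\,s\,e^{-r^2/8s}$ and Lemma \ref{pk} shows that the kernel is absolutely integrable on $S_R\times[t_1,t]$ at $x=x^0$, so the direct value $\partial_\eta U(x^0,t)$ exists. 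For the remainder, the extra factor $\varphi(y,\tau)-\varphi(x^0,t)\to0$ near the corner; by uniform continuity of $\varphi$ the contribution of a small corner neighborhood is uniformly small in $x$, while away from the corner the kernel is continuous in $x$. Hence the remainder is continuous across $S_R$ and produces no jump, and the entire jump equals $\varphi(x^0,t)$ times the jump of $\partial_\eta W$.

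Third, I would compute the jump of $\partial_\eta W$ by approaching $x^0$ along the inner normal, $x=x^0-\epsilon\eta$ with $\epsilon\downarrow0$. Flattening $S_R$ to its tangent hyperplane at $x^0$ and writing $y=x^0+y'$ with $y'$ tangential, one has $\eta\cdot(x-y)\approx-\epsilon$ and $|x-y|^2\approx\epsilon^2+|y'|^2$, so the leading part of $\partial_\eta W(x)$ is
$$\int_{0}^{t-t_1}\!\!\int_{\mathbb{R}^{n-1}}\frac{\epsilon}{2s}\,\frac{e^{-(\epsilon^2+|y'|^2)/4s}}{(4\pi s)^{n/2}}\,dy'\,ds=\frac{\epsilon}{4\sqrt{\pi}}\int_0^{t-t_1}s^{-3/2}e^{-\epsilon^2/4s}\,ds,$$
where $s=t-\tau$ and the $(n-1)$-dimensional Gaussian has been integrated out. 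The substitution $u=\epsilon^2/(4s)$ turns the last integral into $\frac{1}{2\sqrt{\pi}}\int_{u_0}^{\infty}u^{-1/2}e^{-u}\,du$ with $u_0=\epsilon^2/(4(t-t_1))\to0$, which tends to $\frac{1}{2\sqrt{\pi}}\,\Gamma(\tfrac12)=\tfrac12$. Thus the singular tangent-plane part of $\partial_\eta W(x)$ converges to the universal value $\tfrac12$ as $x\to x^0$ from inside $B_R$, while the complementary part converges to the direct value $\partial_\eta W(x^0)$; the outward orientation of $\eta$ together with the side of approach fixes the sign, producing the term $-\tfrac12$ of the statement. Restoring the factor $\varphi(x^0,t)$ gives $\frac{\partial U}{\partial\eta}(x,t)\to-\frac12\varphi(x^0,t)+\frac{\partial U}{\partial\eta}(x^0,t)$.

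The main obstacle will be the rigorous control of the two approximations made in the third step: replacing the sphere $S_R$ by its tangent hyperplane and discarding the curvature term $\eta\cdot y'=O(|y'|^2)$. One must verify that these corrections, together with the restriction of the $y'$-integration to a bounded coordinate chart rather than all of $\mathbb{R}^{n-1}$ and the truncation of the $s$-integral at $t-t_1$, contribute $o(1)$ as $\epsilon\to0$; this is exactly where the Gaussian decay of $\Gamma$, the estimate $r^2e^{-r^2/4s}\le C\,s\,e^{-r^2/8s}$, and Lemma \ref{pk} are used quantitatively. The freezing argument of the second step relies on the same uniform estimates, which legitimize interchanging the limit $x\to x^0$ with the integral on the remainder term.
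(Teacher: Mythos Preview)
The paper does not supply its own proof of this theorem: it is quoted verbatim as a classical result from Friedman's monograph \cite{23} and is used only as a black box in the proof of Theorem~\ref{theorem d}. Your sketch is precisely the standard argument found in that reference (Chapter~5): differentiate the single-layer potential under the integral sign, freeze the density at $(x^0,t)$ so that the modulus-of-continuity factor kills the corner singularity in the remainder, flatten $S_R$ locally to its tangent hyperplane, and reduce the jump to an explicit one-dimensional Gaussian integral. In that sense there is nothing to compare against---you have reconstructed the classical proof that the paper merely cites.

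One point deserves care. In your third step the tangent-plane computation, exactly as you write it, yields the value $+\tfrac12$ for the singular part; the sentence ``the outward orientation of $\eta$ together with the side of approach fixes the sign, producing the term $-\tfrac12$'' is not a derivation but an assertion. The sign must emerge from the calculation itself. In fact the discrepancy is upstream of your argument: the potential to which the paper actually applies the jump relation in the proof of Theorem~\ref{theorem d} is the \emph{double layer} $\int\!\int(\partial\Gamma/\partial\eta_y)\,u\,dS_y\,d\tau$, whose kernel on the flattened boundary is $\eta\cdot(x-y)/[2(t-\tau)]\,\Gamma$, i.e.\ the opposite sign to the single-layer normal derivative you compute. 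For the double layer the same Gaussian calculation gives $-\tfrac12$, which is what the paper uses. So your computation is correct for the object you evaluate; just do not bend its sign to match a formula that, as literally stated in the theorem, conflates the two potentials.
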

\begin{theorem}\label{theorem d}
Let $(u,v)$ be a solution of (\ref{c1}), which blows up in finite time T. Then there exist positive constants $C_1,C_2$  such that
 \begin{align*}
\max_{ \overline{B}_R}u(x,t)\le \log C_1-\frac{\alpha}{2} \log(T-t),\quad 0< t<T, \\ \max_{ \overline{B}_R}v(x,t)\le \log C_2-\frac{\beta}{2} \log (T-t),\quad 0< t<T. \end{align*} 
\end{theorem}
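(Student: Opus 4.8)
The plan is to reduce the theorem to Lemma~\ref{d} by constructing suitable functions $A(t)$ and $B(t)$ measuring the growth of $u$ and $v$ on the boundary, and by showing these satisfy the differential inequalities required there. Since by Lemma~\ref{c} the solution is radial with $u_r,v_r\ge 0$, the maxima over $\overline B_R$ are attained on $S_R$, so it suffices to control $u(R,t)$ and $v(R,t)$. A natural choice is $A(t)=e^{\,q\,u(R,t)/(\text{something})}$ — more precisely, mimicking the exponential-model results cited from \cite{7,18} and the power-model of \cite{20}, I would set something like $A(t)=\max_{\overline B_R} e^{v^p(x,t)}$ (the nonlinearity appearing in the flux for $u$) and $B(t)=\max_{\overline B_R} e^{u^q(x,t)}$, or work instead with $A=e^{u}$, $B=e^{v}$ after a change of variables; the precise algebraic form has to be chosen so that the exponents coming out of Lemma~\ref{d} are exactly $\alpha=(p+1)/(pq-1)$ and $\beta=(q+1)/(pq-1)$.

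The key step is the \emph{lower} bound on the time derivatives $A'(t)\ge c\,B^p(t)/\sqrt{T-t}$ (and symmetrically for $B'$). This is where the representation formula and the jump relation (Theorem~\ref{Jump}) enter. First I would write $u$ via Green's/Poisson's representation for the heat equation on $B_R$ with the given Neumann data: $u(x,t)$ equals the contribution of the initial data plus a single-layer potential $\int_0^t\int_{S_R}\Gamma(x-y,t-z)\,e^{v^p(y,z)}\,ds_y\,dz$ (up to the correct Green's-function kernel for the ball). Differentiating in the normal direction at a boundary point and applying Theorem~\ref{Jump} produces the boundary flux $e^{v^p(R,t)}$ on the left, and on the right an integral of $e^{u^q}$-type terms against the heat kernel. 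Using $u_t\ge 0$ (monotonicity in time, Lemma~\ref{c}(ii)) and Lemma~\ref{pk} to bound the spatial kernel integrals $\int_{S_R}|x-y|^{-a}\,ds_y$, the space integral collapses and one is left, after estimating $\int_0^t \Gamma$-type kernels in the time variable, with a lower bound of the form $\partial_t(\text{growth of }u)\gtrsim (\text{growth of }v)^p/\sqrt{T-t}$. Combining the two yields exactly the hypotheses of Lemma~\ref{d}; applying it gives $e^{v^p(R,t)}\le C(T-t)^{-\alpha/2}$ and $e^{u^q(R,t)}\le C(T-t)^{-\beta/2}$, and taking logarithms twice (once to strip the outer exponential, noting the $v^p$, $u^q$ exponents are absorbed into constants via the equivalence of the various exponential growth functions — or by choosing $A,B$ so this is automatic) produces the stated bounds $\max u\le \log C_1-\tfrac\alpha2\log(T-t)$ and $\max v\le \log C_2-\tfrac\beta2\log(T-t)$.

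The main obstacle I anticipate is making the integral estimate rigorous: the single-layer potential for the \emph{ball} is not simply the free-space $\Gamma$ but involves the reflected/Green kernel, and one must check that the jump relation still applies cleanly and that the "good" part of the kernel contributes only lower-order terms. Equally delicate is extracting the precise $\sqrt{T-t}$ singularity: after using $u_t\ge 0$ to pull the (increasing) boundary value $e^{u^q(R,z)}$ or $e^{u^q(R,t)}$ out of the $z$-integral at an appropriate endpoint, one needs a sharp lower bound on $\int_{0}^{t}\!\int_{S_R}\Gamma(\,\cdot\,,t-z)\,ds_y\,dz$ or its normal derivative near the blow-up time, and this requires care about constants depending only on $R,n$ and not on $t$. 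Finally, one must confirm that blow-up of $u$ (or $v$) alone, combined with the coupling, forces both $A(t)$ and $B(t)\to\infty$, so that the alternative hypothesis of Lemma~\ref{d} is met; this follows from the simultaneous-blow-up remark in Section~2 but should be stated explicitly. The assertion that blow-up occurs only on the boundary then follows from the interior parabolic regularity (boundedness of $u,v$ away from $S_R$, via the representation formula and Lemma~\ref{pk}) exactly as in \cite{19,7,18}.
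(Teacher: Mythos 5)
Your ingredient list (representation formula, jump relation, Lemma \ref{pk}, monotonicity from Lemma \ref{c}, reduction to Lemma \ref{d}) is the same as the paper's, but the step you yourself single out as the key one does not go through as you describe it. You want pointwise boundary growth functions, e.g.\ $A(t)=e^{u(R,t)}$ or $A(t)=\max_{\overline B_R} e^{v^p}$, to satisfy $A'(t)\ge c\,B^p(t)/\sqrt{T-t}$, and you propose to obtain this by differentiating the representation formula in the normal direction and invoking Theorem \ref{Jump}. That computation only yields an identity relating the prescribed flux $e^{v^p(R,t)}$ to space--time integrals of $u$ and $v$; it produces no time derivative of any boundary quantity, and no quantitative lower bound on $u_t$ is available (Lemma \ref{c} gives $u_t>0$ but no rate), so the asserted differential inequality for your $A,B$ is not derivable with these tools. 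The paper's argument has a different structure: one lets $x\to S_R$ in the representation of $u$ itself (the jump is applied to the double-layer term with density $u$, not to a normal derivative of the single layer), bounds that double-layer term using $|\partial\Gamma/\partial\eta_y|\le C_0(t-\tau)^{-\mu}|x-y|^{-(n+1-2\mu-\sigma)}$ together with Lemma \ref{pk}, and absorbs it into the left-hand side by choosing $z_1$ so close to $T$ that $C^*_1(T-z_1)^{1-\mu}\le 1/2$; this gives the \emph{integral} inequality $M(t)\ge c\int_{z_1}^{t}e^{N^p(\tau)}(T-\tau)^{-1/2}\,d\tau$, and symmetrically for $N$. Lemma \ref{d} is then applied not to $M,N$ or $e^{u(R,t)}$ but to the integrals $I_1(t)=c\int_{z_1}^{t}e^{pN(\tau)}(T-\tau)^{-1/2}\,d\tau$ and $I_2(t)=c\int_{z_2}^{t}e^{qM(\tau)}(T-\tau)^{-1/2}\,d\tau$ themselves, whose derivatives are known exactly and for which $I_1'\ge cI_2^{\,p}/\sqrt{T-t}$, $I_2'\ge cI_1^{\,q}/\sqrt{T-t}$ follow from $e^{M}\ge I_1$, $e^{N}\ge I_2$. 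Finally, a transfer step absent from your outline is needed to get back from $I_1,I_2$ to $M,N$: with $t^{*}=2t-T$, monotonicity of $N$ gives $I_1(t)\ge 2c(\sqrt2-1)\sqrt{T-t}\,e^{pN(t^{*})}$, and comparing with $I_1(t)\le C(T-t)^{-\alpha/2}$ yields the stated bound for $v$ (similarly for $u$).

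A second unresolved point is the exponent bookkeeping you explicitly defer (``chosen so that the exponents come out right'', ``taking logarithms twice''). With $A=\max e^{v^{p}}$, $B=\max e^{u^{q}}$ there is no reason the hypotheses of Lemma \ref{d} hold with exponents $p,q$, and even granting a bound $e^{v^{p}(R,t)}\le C(T-t)^{-\alpha/2}$ you would get $v\le\left(\log C+\frac{\alpha}{2}\log\frac{1}{T-t}\right)^{1/p}$, which is not the asserted form. The paper's fix is elementary but essential: since $N(t),M(t)\to\infty$, there is $T^{*}<T$ with $N\ge p^{1/(p-1)}$ and $M\ge q^{1/(q-1)}$ on $[T^{*},T)$, hence $e^{N^{p}}\ge e^{pN}$ and $e^{M^{q}}\ge e^{qM}$; only after this replacement are $I_1,I_2$ formed and Lemma \ref{d} invoked with exponents $p,q$, which is what produces exactly $\alpha/2$ and $\beta/2$. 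A minor remark: your worry about needing the reflected Green kernel of the ball is unnecessary, since the paper works with the free-space fundamental solution in Green's second identity and controls the resulting double-layer term as above rather than eliminating it.
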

\begin{proof}
We follow the idea of \cite{20}, define the functions $M$ and $M_b$ as follows
$$M(t)=\max_{\overline{B}_R}u(x,t), \quad \mbox{and}\quad M_b(t)=\max_{S_R}u(x,t).$$ Similarly, 
 $$N(t)=\max_{ \overline{B}_R}v(x,t), \quad \mbox{and}\quad N_b(t)=\max_{S_R}v(x,t).$$
 Depending on Lemma \ref{c}, both of $M,M_b$ are monotone increasing functions, and
  since $u$ is a solution of heat equation, it cannot attain interior maximum without being constant,
therefore, $$M(t)=M_b(t).\quad \mbox{Similarly}\quad N(t)=N_b(t).$$
Moreover, since $u,v$ blow up simultaneously, therefore, we have 
 \begin{equation}\label{Nobal} M(t)\longrightarrow +\infty, \quad N(t)\longrightarrow +\infty  \quad \mbox{as} \quad t\longrightarrow T^{-}.\end{equation}As in \cite{22,20}, for $0<z_1<t<T $ and $ x\in B_R,$ depending on the second Green's identity with assuming the Green function: $$G(x,y;z_1,t)=\Gamma(x-y,t-z_1),$$ where $\Gamma$ is defined in (\ref{op}), the integral equation to problem (\ref{c1}) with respect to $u,$ can be written as follows
\begin{eqnarray*}u(x,t)&=&\int_{B_R} \Gamma (x-y,t-z_1)u(y,z_1)dy+\int_{z_1}^t \int_{S_R}e^{v^p(y,\tau)}
\Gamma (x-y,t-\tau)ds_y d_\tau\\   &&-\int_{z_1}^t \int_{S_R}
u(y,\tau ) \frac{\partial \Gamma}{\partial \eta _y} (x-y,t-\tau )ds_y d \tau,\end{eqnarray*}
As in \cite{22}, letting $x\rightarrow S_R$ and using the jump relation (Theorem \ref{Jump}) for the third term on the right hand side of the last equation, it follows that
\begin{eqnarray*}\frac{1}{2}u(x,t)&=&\int_{B_R} \Gamma (x-y,t-{z_1})u(y,z_1)dy+\int_{z_1}^t \int_{S_R}e^{v^p(y,\tau)}
\Gamma (x-y,t-\tau)d{s_y}d_\tau\\  &&-\int_{z_1}^t \int_{S_R}
u(y,\tau ) \frac{\partial \Gamma}{\partial \eta _y} (x-y,t-\tau )ds_y d \tau,\end{eqnarray*} for $x\in S_R, 0<z_1<t<T.$

Depending on Lemma \ref{c} we notice that $u,v$  are positive and radial.Thus \begin{eqnarray*}&&\int_{B_R} \Gamma (x-y,t-z_1)u(y,z_1)dy >0,\\  &&\int_{z_1}^t \int_{S_R}e^{v^p(y,\tau)}
\Gamma (x-y,t-\tau)d{s_y}d_\tau=\int_{z_1}^t e^{v^p(R,\tau)}[\int_{S_R}\Gamma (x-y,t-\tau)ds_y]d\tau. \end{eqnarray*} This leads to
\begin{eqnarray*}\frac{1}{2}M(t) &\ge& \int_{z_1}^t e^{N^p(\tau)}[\int_{S_R}\Gamma (x-y,t-\tau)ds_y]d\tau\\
&&-\int_{z_1}^t M(\tau)[\int_{S_R} | \frac{\partial \Gamma}{\partial \eta _y} (x-y,t-\tau )| ds_y] d \tau, \quad x\in S_R, 0<z_1<t<T.\end{eqnarray*}
It is known that (see \cite{23}) there exist $C_0>0,$ such that $\Gamma$ satisfies 
 $$|\frac{\partial \Gamma}{\partial \eta_{y}}(x-y,t-\tau)|\le \frac{C_0}{(t-\tau )^\mu}\cdot \frac{1}{|x-y|^{(n+1-2\mu- \sigma ) }},\quad x,y \in S_R, ~\sigma\in(0,1).$$
Choose $1-\frac{\sigma}{2} < \mu <1,$ from Lemma \ref{pk}, there exist $C^*>0$ such that $$\int_{S_R}\frac{ds_y}{|x-y|^{(n+1-2\mu- \sigma ) }} <C^*.$$ Moreover, for $0<t_1<t_2$ and $t_1$ is closed to $t_2,$ there exists $c>0,$ such that
$$\int_{S_R} \Gamma (x-y,t_2-t_1)ds_y \ge  \frac{c}{\sqrt{t_2-t_1}},$$

Thus $$\frac{1}{2} M(t) \ge c \int_{z_1}^t \frac{e^{N^p(\tau)}}{\sqrt{t-\tau}}d\tau-C\int_{z_1}^t \frac{M(\tau)}{|t-\tau|^{\mu}}d \tau.$$
Since for $0<z_1< t_0< t <T,$ it follows that $M(t_0) \le M(t),$ thus the last equation becomes
\begin{equation*}\label{d8}
\frac{1}{2}M(t)\ge c \int_{z_1}^t \frac{e^{N^p(\tau)}}{\sqrt{T-\tau}}d\tau-C^*_1 M(t)|T-z_1|^{1-\mu}.
\end{equation*}
Similarly, for $0<z_2< t <T,$ we have
\begin{equation*}
\frac{1}{2}N(t)\ge c \int_{z_2}^t \frac{e^{M^q(\tau)}}{\sqrt{T-\tau}}d\tau-C^*_2 N(t)|T-z_2|^{1-\mu}.
\end{equation*}
Taking $z_1,z_2$ so that  $$C^*_1|T-z_1|^{1-\mu}\le1/2,\quad C^*_2|T-z_2|^{1-\mu}\le 1/2,$$ it follows 
\begin{equation}\label{mol}
M(t)\ge c \int_{z_1}^t \frac{e^{N^p(\tau)}}{\sqrt{T-\tau}}d\tau, \quad
N(t)\ge c \int_{z_2}^t \frac{e^{M^q(\tau)}}{\sqrt{T-\tau}}d\tau.
\end{equation}

Since both of $M,N$ increasing functions and from (\ref{Nobal}), we can find $T^*$ in $(0,T)$  such that  
$$M(t)\ge q^{\frac{1}{(q-1)}},\quad N(t)\ge p^{\frac{1}{(p-1)}}, \quad \mbox{for}\quad T^*\le t<T.$$  
 Thus
  $$e^{M^q(t)}\ge e^{qM(t)}, \quad e^{N^p(t)}\ge e^{pN(t)},\quad T^*\le t<T.$$  

 Therefore, if we choose $z_1,z_2$ in $(T^*,T),$ then (\ref{mol}) becomes 
 $$e^{M(t)}\ge c \int_{z_1}^t \frac{e^{pN(\tau)}}{\sqrt{T-\tau}}d\tau\equiv I_1(t), \quad
e^{N(t)}\ge c \int_{z_2}^t \frac{e^{qM(\tau)}}{\sqrt{T-\tau}}d\tau\equiv I_2(t).$$
Clearly, $$I_1^{'}(t)=c\frac{e^{pN(t)}}{\sqrt{T-t}}\ge \frac{cI_2^p}{\sqrt{T-t}},\quad I_2^{'}(t)=c\frac{e^{qM(t)}}{\sqrt{T-t}}\ge \frac{cI_1^q}{\sqrt{T-t}}.$$
By Lemma \ref{d}, it follows that 
\begin{equation}\label{ef} I_1(t)\le \frac{C}{(T-t)^{\frac{\alpha}{2}}},\quad I_2(t)\le \frac{C}{(T-t)^{\frac{\beta}{2}}},\quad t \in (\max\{z_1,z_2\},T).\end{equation}  
On the other hand, for $t^*=2t-T$ (Assuming that $t$ is close to $T$).
$$I_1(t)\ge c\int_{t^*}^t \frac{e^{pN(\tau)}}{\sqrt{T-\tau}}d\tau\ge c e^{pN(t^*)} \int_{2t-T}^t \frac{1}{\sqrt{T-\tau}}d\tau=2c(\sqrt{2}-1)\sqrt{T-t} e^{pN(t^*)}.$$
Combining the last inquality with (\ref{ef}) yields  
$$e^{N(t^*)} \le \frac{C}{2c(\sqrt{2}-1)(T-t)^{\frac{p+1}{2p(pq-1)}+\frac{1}{2p} } }= \frac{2^{\frac{q+1}{2(pq-1)}}C}{2c(\sqrt{2}-1)(T-t^*)^{\frac{q+1}{2(pq-1)} } }.$$
Thus, there exists a constant $c_1>0$ such that 
 \begin{equation*} e^{N(t^*)} (T-t^*)^{\frac{q+1}{2(pq-1)}} \le c_1.\end{equation*}
In the same way we can show \begin{equation*}e^{M(t^*)} (T-t^*)^{\frac{p+1}{2(pq-1)}}\le c_2.\end{equation*} This leads to, there exists $C_1,C_2>0$ such that 
  \begin{align}\label{yas}
  \max_{ \overline{B}_R}u(x,t)\le \log C_1-\frac{\alpha}{2} \log(T-t),\quad 0< t<T, \\ \label{sad}\max_{ \overline{B}_R}v(x,t)\le \log C_2-\frac{\beta}{2} \log (T-t),\quad 0< t<T.\end{align}  
\end{proof}
\section{Blow-up Set}
In order to show that the blow-up to problem (\ref{c1}) occurs only on the boundary, we need to recall the following lemma from \cite{18}.
\begin{lemma}\label{power}
Let $w$ is a continuous function on the domain $\overline{B}_R\times[0,T)$ and satisfies 
\begin{equation*} \left.\begin{array}{ll} 
w_t=\Delta w,&\quad (x,t) \in B_R \times (0,T),\\
w(x,t) \le \frac{C}{(T-t)^m},& \quad (x,t) \in S_R \times(0,T),\quad m>0.
\end{array} \right\} \end{equation*}
Then for any  $0<a<R$
$$\sup\{ w(x,t): 0 \le|x|\le a,~0\le t < T \} < \infty.$$
\end{lemma}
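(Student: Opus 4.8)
The plan is to represent the caloric function $w$ through the Dirichlet heat kernel of the ball and then exploit the off-diagonal Gaussian decay of its normal derivative: for points $x$ with $\operatorname{dist}(x,S_R)\ge R-a>0$ this decay is strong enough to absorb the boundary singularity $(T-t)^{-m}$. Since $w$ is caloric in $B_R$ and continuous up to $S_R$, it is the solution of the Dirichlet problem with initial datum $w(\cdot,0)$ and boundary datum $g(y,\tau):=w(y,\tau)|_{S_R}$. Writing $G_D(x,y,t)$ for the Dirichlet heat kernel of $B_R$ and $P(x,y,s):=-\partial_{\eta_y}G_D(x,y,s)\ge 0$ for the associated nonnegative Poisson kernel, I would start from
$$w(x,t)=\int_{B_R}G_D(x,y,t)\,w(y,0)\,dy+\int_0^t\!\!\int_{S_R}P(x,y,t-\tau)\,g(y,\tau)\,ds_y\,d\tau .$$
The first term is harmless: $w(\cdot,0)$ is continuous on the compact set $\overline{B}_R$, $G_D\ge 0$ and $\int_{B_R}G_D(x,y,t)\,dy\le 1$, so this term is bounded by $\max_{\overline{B}_R}|w(\cdot,0)|$. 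Because $P\ge 0$, only the upper bound $g(y,\tau)\le C(T-\tau)^{-m}$ is needed for the second term, so the whole question reduces to bounding $\int_0^t\!\int_{S_R}P(x,y,t-\tau)\,(T-\tau)^{-m}\,ds_y\,d\tau$ uniformly for $x\in \overline{B}_a$ and $t\in[0,T)$.

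The key step is a kernel estimate. I would fix $x$ with $|x|\le a$, so that $|x-y|\ge \delta:=R-a$ for every $y\in S_R$. By the method of images $G_D\le \Gamma$, and the normal derivative of $G_D$ obeys the parabolic analogue of the $\partial\Gamma/\partial\eta$ bound recalled before the theorem, now in Gaussian form, namely $|P(x,y,s)|\le C\,s^{-\gamma}\,e^{-|x-y|^2/(cs)}$ for suitable $\gamma,c>0$ and all $y\in S_R$. Integrating in $y$ over $S_R$ (a Gaussian integration over the sphere, in the spirit of Lemma \ref{pk}) and using $|x-y|\ge\delta$, I would obtain
$$\int_{S_R}P(x,y,s)\,ds_y\le \Psi(s),\qquad \Psi(s)\le C\,s^{-\gamma'}e^{-\delta^2/(c's)},$$
with $\Psi$ independent of $x\in\overline{B}_a$ and, crucially, exponentially small as $s\to 0^+$.

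It then remains to show $\int_0^t\Psi(t-\tau)(T-\tau)^{-m}\,d\tau$ stays bounded as $t\to T^-$. Substituting $s=t-\tau$ and writing $\varepsilon:=T-t$, this equals $\int_0^t\Psi(s)(\varepsilon+s)^{-m}\,ds$, which I would split at $s=\varepsilon$. On $s\ge\varepsilon$ I use $(\varepsilon+s)^{-m}\le s^{-m}$ together with the fact that $\Psi(s)s^{-m}\le C\,s^{-\gamma'-m}e^{-\delta^2/(c's)}$ is bounded and integrable on $(0,T]$ (the exponential beats every power near $s=0$), giving a bound independent of $\varepsilon$. On $s<\varepsilon$ I use $(\varepsilon+s)^{-m}\le\varepsilon^{-m}$ and $\int_0^\varepsilon\Psi(s)\,ds\le C\,\varepsilon^{1-\gamma'}e^{-\delta^2/(2c'\varepsilon)}$ (bounding the slowly varying factor by its value at $s=\varepsilon$), so that this contribution is $\le C\,\varepsilon^{1-\gamma'-m}e^{-\delta^2/(2c'\varepsilon)}\to 0$ as $\varepsilon\to 0$. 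Combining the two pieces with the bounded first term yields $\sup\{w(x,t):|x|\le a,\ 0\le t<T\}<\infty$, as required.

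The main obstacle is the kernel estimate and its uniform use in time: one must justify the Gaussian off-diagonal bound for the normal derivative of the Dirichlet heat kernel at interior points (which follows from the image/parametrix construction and is the parabolic analogue of the $\partial\Gamma/\partial\eta$ bound already invoked), and then confirm that this exponential decay genuinely dominates the polynomial blow-up $(T-\tau)^{-m}$ after integration, uniformly up to $t=T$. I would note in passing that a naive comparison argument is \emph{not} available here: barriers of Gaussian/boundary-layer type that blow up on $S_R$ while decaying inward turn out to be subsolutions in their tails, so they cannot bound $w$ from above in the interior, which is precisely why the representation-and-kernel route is needed.
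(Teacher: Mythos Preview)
Your representation-formula argument is essentially correct: the Dirichlet Poisson kernel $P(x,y,s)$ does obey an off-diagonal Gaussian bound of the type you state when $\operatorname{dist}(x,S_R)\ge\delta>0$, and that decay is more than enough to kill the $(T-\tau)^{-m}$ singularity after integration. (In fact your splitting at $s=\varepsilon$ is unnecessary: since $(\varepsilon+s)^{-m}\le s^{-m}$ for all $s>0$ and $s\mapsto\Psi(s)s^{-m}$ is integrable on $(0,T]$, the bound is immediate.) The step that carries the real weight is the Gaussian estimate for $P$, which you quote rather than prove; it is standard but not as light as the $\partial\Gamma/\partial\eta$ bound from the paper, so you should cite it precisely.

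However, your approach is quite different from the paper's, and your closing remark that ``a naive comparison argument is not available here'' is mistaken. The paper gives exactly such an argument: it builds the explicit supersolution
\[
z(x,t)=\frac{C_1}{\bigl[(R^2-|x|^2)^2+C_2(T-t)\bigr]^{m}},
\]
checks directly that $z_t-\Delta z\ge 0$ for a suitable constant $C_2$ (a two-line computation using $|\nabla h|^2=16|x|^2 h$ for $h(x)=(R^2-|x|^2)^2$), arranges $z\ge w$ at $t=0$ and on $S_R$, and concludes $w\le z$ by the maximum principle. This yields the explicit bound $w(x,t)\le C_1(R^2-a^2)^{-2m}$ on $|x|\le a$. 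So while your Gaussian/boundary-layer barriers may indeed fail to be supersolutions in their tails, a rational barrier of the form above works perfectly. The paper's route is shorter, fully self-contained (no heat-kernel estimates needed), and gives a clean quantitative constant; your route is more robust in that it would transfer to domains where an explicit barrier is hard to guess.
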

\begin{proof}
Set $$h(x)=(R^2-r^2)^2 ,~ r=|x|,$$ 
$$z(x,t)=\frac{C_1}{[h(x)+C_2(T-t)]^m}.$$ 

We can show that:
\begin{eqnarray*}\Delta h-\frac{(m+1)|\nabla h|^2}{h}&=& 8r^2-4n(R^2-r^2)-(m+1)16r^2\\ &\ge&-4nR^2-16R^2(m+1),\\
z_t-\Delta z&=&\frac{C_1m}{[h(x)+C_2(T-t)]^{m+1}}(C_2+\Delta h-\frac{(m+1)|\nabla h|^2}{h+C_2(T-t)})\\
&\ge& \frac{C_1m}{[h(x)+C_2(T-t)]^{m+1}}(C_2-4nR^2-16R^2(m+1)).\end{eqnarray*}
Let $$C_2=4nR^2+16R^2(m+1)+1$$ and take $C_1$ to be large such that $$z(x,0)\ge w(x,0),\quad x\in B_R.$$ Let $C_1\ge C(C_2)^m,$ which implies that $$z(x,t)\ge w(x,t) \quad \mbox{on}\quad S_R \times [0,T).$$
Then from the maximum principle \cite{21}, it follows that $$z(x,t)\ge w(x,t),\quad (x,t) \in \overline{B}_R \times (0,T)$$ and hence 
$$\sup\{w(x,t): 0\le|x|\le  a,0\le t <T\} \le C_1(R^2-a^2)^{-2m} < \infty, \quad 0\le a <R.$$
\end{proof}
 \begin{theorem}
 Let the assumptions of Theorem \ref{theorem d} be in force. Then $(u,v)$ blows up  only on the boundary.
 \end{theorem}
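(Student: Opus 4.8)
The plan is to deduce the result directly from the upper blow-up rate estimates of Theorem~\ref{theorem d} together with Lemma~\ref{power}. The key observation is that blow-up occurring ``only on the boundary'' means: for every $0<a<R$, the solution $(u,v)$ stays bounded on the set $\{|x|\le a\}\times[0,T)$. So the goal reduces to applying Lemma~\ref{power} separately to $u$ and to $v$.

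First I would record that, by Theorem~\ref{theorem d}, there are constants $C_1,C_2>0$ with
\begin{equation*}
u(x,t)\le \log C_1-\tfrac{\alpha}{2}\log(T-t),\qquad v(x,t)\le \log C_2-\tfrac{\beta}{2}\log(T-t)
\end{equation*}
for all $x\in S_R$ and $0<t<T$ (in particular on the boundary). Lemma~\ref{power}, however, is stated for a boundary bound of the polynomial form $C(T-t)^{-m}$, not a logarithmic one, so the next step is to absorb the logarithm into a power. Since $\log(T-t)^{-1}=o\big((T-t)^{-\varepsilon}\big)$ as $t\to T^-$ for any $\varepsilon>0$, for each fixed small $m>0$ there is a constant $C$ with $\log C_1-\tfrac{\alpha}{2}\log(T-t)\le C(T-t)^{-m}$ on $(0,T)$; the same for $v$ with its own constant. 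Thus $u$ (respectively $v$) satisfies exactly the hypotheses of Lemma~\ref{power}: it is continuous on $\overline{B}_R\times[0,T)$, solves the heat equation in $B_R\times(0,T)$, and is bounded on $S_R\times(0,T)$ by $C(T-t)^{-m}$.

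Applying Lemma~\ref{power} to $u$ gives, for every $0<a<R$,
\begin{equation*}
\sup\{u(x,t):0\le|x|\le a,\ 0\le t<T\}<\infty,
\end{equation*}
and applying it to $v$ gives the same with $v$ in place of $u$. Combining the two, $(u,v)$ remains bounded on every compact subset of $B_R\times[0,T)$, so no blow-up point lies in the open ball; since $(u,v)$ does blow up in finite time $T$, the blow-up set is contained in $S_R=\partial B_R$. Using Lemma~\ref{c}(i) (radial monotonicity, $u_r,v_r\ge0$) together with the simultaneous blow-up $M(t),N(t)\to\infty$ established in the proof of Theorem~\ref{theorem d}, one sees the boundary is genuinely reached, so the blow-up set is exactly $S_R$.

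The only real subtlety is the passage from the logarithmic boundary estimate of Theorem~\ref{theorem d} to the polynomial boundary estimate required by Lemma~\ref{power}; this is the step I would write out carefully, but it is elementary (any positive power of $(T-t)^{-1}$ eventually dominates $-\log(T-t)$, and near $t=0$ the left side is bounded), so I do not expect a genuine obstacle. Everything else is a direct citation of the two lemmas.
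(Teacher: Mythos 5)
Your proposal is correct and follows essentially the same route as the paper: convert the logarithmic boundary bounds of Theorem~\ref{theorem d} into a polynomial bound $C(T-t)^{-m}$ on $S_R\times(0,T)$ and then apply Lemma~\ref{power} to $u$ and $v$ separately to get boundedness on $\{|x|\le a\}\times[0,T)$ for every $a<R$. (The paper passes to the bounds $u(R,t)\le c_1(T-t)^{-\alpha/2}$, $v(R,t)\le c_2(T-t)^{-\beta/2}$ without comment, using in effect $\log s\le s$; your explicit justification of this log-to-power step is the only difference, and it is a point of care rather than a different argument.)
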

 \begin{proof}
 Using equations (\ref{yas}), (\ref{sad}) 
  $$u(R,t) \le \frac{c_1}{(T-t)^{\frac{\alpha}{2}}}, \quad v(R,t) \le \frac{c_2}{(T-t)^{\frac{\beta}{2}}},  \quad t \in (0,T).$$
  From Lemma \ref{power}, it follows that  $$\sup\{u(x,t): (x,t)\in B_a\times [0,T)\}\le C_1(R^2-a^2)^{-\alpha} <\infty,$$
  $$\sup\{v(x,t): (x,t)\in B_a\times [0,T)\}\le C_1(R^2-a^2)^{-\beta} <\infty,$$ for $a<R.$
   
 Therefore, $u,v$ blow up simultaneously and the blow-up occurs only on the boundary.
\end{proof}

\end{document}